\documentclass[english]{article}
\usepackage[T1]{fontenc}
\usepackage[latin9]{inputenc}
\usepackage{graphicx}
\usepackage{amsmath}
\usepackage{amsthm}
\usepackage{amssymb}

\newtheorem{thm}{Theorem}

\makeatletter

\providecommand{\tabularnewline}{\\}

\makeatother

\usepackage{babel}
\begin{document}
\renewcommand{\thefootnote}{\fnsymbol{footnote}}

\title{Dense periodic packings of tetrahedra with small repeating units}

    \footnotetext[1]{Laboratory of Atomic and Solid-State Physics, Cornell University, Ithaca, New York 14853}
    \footnotetext[2]{Department of Genetics, Stanford University School of Medicine, Stanford, California 94305-5120}
\author{
    Yoav Kallus\footnotemark[1], Veit Elser\footnotemark[1], Simon Gravel\footnotemark[2]
    }

\date{}

\maketitle

\begin{abstract}
We present a one-parameter family of periodic packings of regular
tetrahedra, with the packing fraction $100/117\approx0.8547$, that are
simple in the sense that they are transitive and their repeating units
involve only four tetrahedra. The construction of the packings was
inspired from results of a numerical search that yielded a similar
packing. We present an analytic
construction of the packings and a description of their properties.
We also present a transitive packing with a repeating unit of two tetrahedra
and a packing fraction $\frac{139+40\sqrt{10}}{369}\approx0.7194$.
\end{abstract}

\section{Introduction}
The optimization problem of packing regular tetrahedra densely in space
has seen invigorated interest over the last few years \cite{Welsh,Chen,TorqNat,TorqPRE,Glotzer}.
This interest has helped drive up the packing fraction of the densest-known such
packings from $0.7174$ in 2006 \cite{Welsh} to $0.8503$ \cite{Glotzer}
most recently (see Table 3). These improved packing fractions have been obtained
from more and more complex packings, with larger and larger repeating
units. This trend has led some to conjecture that the densest packing
of tetrahedra might have inherent disorder \cite{TorqPRE}. The more
restrictive problem of packing tetrahedra transitively --- that
is, so that all tetrahedra in the packing are equivalent (a more rigorous definition is given
below) --- has been less extensively studied and the densest previously-reported
transitive packing of regular tetrahedra fills only $2/3$ of space \cite{Welsh}.
Here we present a one-parameter family of transitive but dense packings of tetrahedra
with the packing fraction $100/117\approx0.8547$.

The discovery of this family of dense packings was inspired by the
results of a numerical search, which yielded a dense packing with
similar structural properties to the packing we present.
The numerical method used was adapted from the \textit{divide
and concur} approach to constraint satisfaction problems \cite{D-C}.
The \textit{divide and concur} formalism enables us to set up an efficient
search through the parameter space consisting of the positions and
orientations of tetrahedra inside the repeating unit and the translation vectors
governing its lattice repetition, subject to the constraint
that no two tetrahedra overlap. The dynamics involved in the \textit{divide
and concur} search are highly non-physical, which might explain why
our method was able to discover this dense packing, while earlier
methods involving more physical dynamics were not \cite{TorqNat,TorqPRE,Glotzer}.
In this note we present only the analytically constructed packing
without a full explication
of the numerical method, which will be forthcoming.

\section{One-parameter family of dimer-double-lattice packings}

\begin{table}
\begin{tabular}{|l|l|}
\hline 
fundamental & $T_0=\operatorname{conv} \{\mathbf{r}_i \mid i=1,2,3,4\}$
\tabularnewline
tetrahedron &
$\mathbf{r}_{1}=\frac{27}{28}\mathbf{a}-\frac{7}{30}\mathbf{b}+\frac{10}{39}\mathbf{c}$
\tabularnewline
 & 
$\mathbf{r}_{2}=\frac{1}{4}\mathbf{a}-\frac{9}{10}\mathbf{b}$
\tabularnewline
 & $\mathbf{r}_{3}=\frac{1}{14}\mathbf{a}+\frac{1}{10}\mathbf{b}+\frac{5}{13}\mathbf{c}$
\tabularnewline
 & $\mathbf{r}_{4}=\frac{3}{7}\mathbf{a}+\frac{1}{10}\mathbf{b}-\frac{5}{13}\mathbf{c}$
\tabularnewline
\hline 
other tetrahedra & $T_1=R_2(T_0)=\operatorname{conv} \{\frac{2}{3}(\mathbf{r}_2+\mathbf{r}_3+\mathbf{r}_4)-\mathbf{r}_1,\mathbf{r}_2,\mathbf{r}_3,\mathbf{r}_4\}$ \tabularnewline
in the unit cell & $T_2=I(T_0)=-T_0$ \tabularnewline
& $T_3=I(T_1)=-T_1$ \tabularnewline
\hline
space group & translations by $\mathbf{a}$, $\mathbf{b}$, $\mathbf{d}_x=\frac{1}{2}\mathbf{b}+\frac{1}{2}\mathbf{c}+x\mathbf{a}$\tabularnewline
generators & $I=$ inversion about $0$\tabularnewline
& $R_2=$ two-fold rotation about $\{\frac{1}{4}\mathbf{a}+t\mathbf{b} \mid t \in \mathbb{R}\}$\tabularnewline
\hline 
packing fraction & $100/117$\tabularnewline
\hline
coordinate basis for which & $\mathbf{a}=(2\sqrt{7}/5,0,0)$\tabularnewline
tetrahedra are regular & $\mathbf{b}=(0,\sqrt{3}/2,0)$\tabularnewline
with unit edge length & $\mathbf{c}=(0,0,13\sqrt{3/14}/5)$\tabularnewline
\hline
\end{tabular}

\caption{
    The construction of the dimer-double-lattice family of packings
    in terms of the parameter $29/56\le x\le9/14$ in a general monoclinic
    coordinate basis ($\mathbf{a}\cdot\mathbf{b}=\mathbf{b}\cdot\mathbf{c}=0$).
    The packing is generated starting from the fundamental tetrahedron by
    the action of the space group. A packing of regular tetrahedra is
    obtained when the general monoclinic coordinate basis reduces to the
    specified orthogonal coordinate basis.
}

\end{table}

The first set of packings we report on are naturally described as double lattices
of bipyramidal dimers. A double lattice is the union of two Bravais
lattices related to each other by an inversion operation about some
point. In \cite{Kup1}, Kuperberg and Kuperberg used the idea of a
double lattice in the Euclidean plane to show that any planar convex
body can be packed in an arrangement with a packing fraction no smaller
than $\sqrt{3}/2$. We naturally extend the idea of the double lattice
to the three-dimensional Euclidean space. The repeating unit of one
constituent lattice is a bipyramidal dimer: two regular tetrahedra sharing a
common face. Two of these dimers with mutually-inverted orientation ---
a Kuper-pair --- form the repeating unit of the double lattice, which
thus has four tetrahedra of distinct orientations. We state the existence
of the packings as Theorem 1.

\begin{thm}
There exists a one-parameter family of packings of regular tetrahedra,
each having packing fraction $110/117$. These packings are
periodic, with each unit cell of the lattice containing two bipyramidal dimers.
The group of isometries leaving each packing invariant is a crystallographic space group
of type $C2/c$ (following the classification and notation of \cite{spacegroups})
and acts transitively on the individual tetrahedra of the packing.
\end{thm}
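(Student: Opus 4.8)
\section*{Proof proposal}

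The plan is to read the data of Table~1 as an explicit construction and to verify the four assertions of the theorem---regularity of the building block, periodicity with the stated repeating unit, the space-group type $C2/c$ with transitive action, and the packing fraction---in turn, reserving the genuine effort for the non-overlap condition. First I would confirm that $T_0$ is a regular tetrahedron of unit edge: substituting the orthogonal basis $\mathbf{a}=(2\sqrt{7}/5,0,0)$, $\mathbf{b}=(0,\sqrt{3}/2,0)$, $\mathbf{c}=(0,0,13\sqrt{3/14}/5)$ into $\mathbf{r}_1,\dots,\mathbf{r}_4$ and computing the six pairwise distances $|\mathbf{r}_i-\mathbf{r}_j|$ reduces to checking that all six equal $1$, a finite explicit calculation. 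Since $R_2$ and $I$ are isometries, $T_1,T_2,T_3$ are then automatically regular unit tetrahedra. I would also check that $R_2$ fixes the triangle $\operatorname{conv}\{\mathbf{r}_2,\mathbf{r}_3,\mathbf{r}_4\}$ setwise and carries $\mathbf{r}_1$ to the point $\tfrac{2}{3}(\mathbf{r}_2+\mathbf{r}_3+\mathbf{r}_4)-\mathbf{r}_1$, so that $T_1=R_2(T_0)$; because this fourth vertex is the reflection of $\mathbf{r}_1$ through the centroid of the shared face, it lies on the opposite side of the face plane, and $T_0\cup T_1$ is a genuine bipyramid with disjoint interiors, confirming the dimer picture.

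Next I would pin down the symmetry group $G$ generated by the translations $\mathbf{a},\mathbf{b},\mathbf{d}_x$, the inversion $I$, and the rotation $R_2$. I would identify its translation subgroup as a centred monoclinic Bravais lattice (writing $\mathbf{d}_x=\tfrac12(\mathbf{b}+\mathbf{c}')$ with $\mathbf{c}'=\mathbf{c}+2x\mathbf{a}$ a lattice vector exhibits the centering) with unique axis $\mathbf{b}$, and its point group---generated by the images of $I$ and $R_2$---as $2/m$, so that $G$ is of the stated type $C2/c$ (No.~$15$). Transitivity then follows by bookkeeping: the four point-group cosets $E,R_2,I,IR_2$ send $T_0$ to $T_0,T_1,T_2,T_3$ respectively, and every tetrahedron of the packing is a lattice translate of one of these, so the orbit of $T_0$ under $G$ is the entire packing.

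For the packing fraction, the repeating cell contains exactly these four tetrahedra, each of volume $\sqrt{2}/12$, while the cell volume is $|\mathbf{a}\cdot(\mathbf{b}\times\mathbf{d}_x)|=\tfrac12|\mathbf{a}\cdot(\mathbf{b}\times\mathbf{c})|$; this is \emph{independent of} $x$ because the $x\mathbf{a}$ and $\tfrac12\mathbf{b}$ components of $\mathbf{d}_x$ drop out of the triple product, which is exactly why the whole family shares one packing fraction. Evaluating the orthogonal triple product gives $\tfrac{2\sqrt7}{5}\cdot\tfrac{\sqrt3}{2}\cdot\tfrac{13\sqrt{3/14}}{5}=\tfrac{39}{25\sqrt2}$, so the cell volume is $39/(50\sqrt2)$ and the packing fraction is $(\sqrt2/3)/(39/(50\sqrt2))=100/117$, the value recorded in Table~1 (the $110/117$ in the statement appears to be a typographical slip for $100/117$).

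The crux, and the only place real effort is required, is proving that the tetrahedra have pairwise disjoint interiors for every $x$ with $29/56\le x\le 9/14$. Because $G$ acts transitively, any overlapping pair is isometric to a pair $(T_0,g(T_0))$, so it suffices to show $T_0$ meets no other tetrahedron; and since tetrahedra whose centroids are far from that of $T_0$ cannot overlap, this reduces to a finite list of candidate neighbours, namely the lattice translates of $T_0,T_1,T_2,T_3$ by small lattice vectors. For each candidate pair I would exhibit an explicit separating plane: by the separating-axis theorem for convex polytopes it suffices to test the finitely many directions given by the face normals of the two tetrahedra and the cross products of one edge from each, and for a valid pair one such direction separates them, with the two tetrahedra lying in opposite closed half-spaces (boundary contact permitted). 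Carrying this out with the vertex coordinates as functions of $x$ produces inequalities in $x$, and I expect the endpoints $x=29/56$ and $x=9/14$ to be precisely the values at which some separating inequality degenerates to an equality---where a face-to-face or edge contact first forms---thereby simultaneously establishing non-overlap on the closed interval and identifying it as the maximal range of the family. The main obstacle is the combinatorial bookkeeping: enumerating the relevant neighbour pairs and, for each, selecting a single separating axis that remains valid uniformly across the whole parameter interval.
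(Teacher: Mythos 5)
Your proposal follows essentially the same route as the paper: construct the arrangement by the $C2/c$ group action on $T_0$, get transitivity and the packing fraction ($100/117$; you correctly spot the typo in the theorem statement) for free from the construction, and reduce non-overlap to a finite check of nearby tetrahedra (the paper uses a centroid-distance cutoff of $\sqrt{3/2}$, yielding 46 candidates) each certified by a separating plane. The only cosmetic difference is your choice of candidate separators --- separating-axis directions from face normals and edge cross products --- versus the paper's planes through three of the eight vertices; both are valid finite criteria for convex polytopes and the argument is otherwise identical.
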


\begin{proof} We construct each packing by acting on a single regular tetrahedron
with a group of isometries. For a coordinate basis we use three pair-wise
orthogonal vectors $\mathbf{a}$, $\mathbf{b}$, and $\mathbf{c}$, of norms
$|\mathbf{a}|=2\sqrt{7}/5$, $|\mathbf{b}|=\sqrt{3}/2$, and $|\mathbf{c}|=13\sqrt{3/14}/5$.
The initial tetrahedron $T_0=\operatorname{conv} \{\mathbf{r}_i \mid i=1,2,3,4\}$ 
is the convex hull of four vertices whose coordinates are given in Table 1; 
it is a regular tetrahedron of unit edge length.
The group of isometries is the group of crystallographic type $C2/c$
generated by the translations $\mathbf{a}$, $\mathbf{b}$, 
and $\mathbf{d}_x=\frac{1}{2}\mathbf{b}+\frac{1}{2}\mathbf{c}+x\mathbf{a}$ ($29/56\le x\le9/14$),
by the inversion about the point $0$, and by the rotation by 180 degrees about the axis
$\{\frac{1}{4}\mathbf{a}+t\mathbf{b} \mid t \in \mathbb{R}\}$ \cite{spacegroups}.
This space group has a point group of order 4 and its
translations generate a centered monoclinic point lattice. The construction is
summarized in Table 1.

By construction, each tetrahedron in the packing is the image of $T_0$
under an isometry in the group. We have immediately then that
all tetrahedra are congruent with $T_0$,
that the packing is invariant under the action of the group, and that the group
acts transitively on individual tetrahedra. As the tetrahedra divide into four
orbits of the lattice translations, the packing fraction is given by
$$\phi=\frac{4 \operatorname{vol}(T_0)}{|\det([\mathbf{a},\mathbf{b},\mathbf{d}_x])|}
=\frac{4 (\sqrt{2}/12)}{|\mathbf{a}||\mathbf{b}||\mathbf{c}|/2}
=\frac{100}{117}$$

All that is left then to prove the theorem is to verify that the arrangement of tetrahedra thus constructed
is indeed a packing. As the packing is transitive (in the sense that its symmetry group acts transitively
on the constituent tetrahedra), it is enough to check that one tetrahedron, $T_0$, does not overlap any
other tetrahedron. By means of a closest-lattice-point algorithm such as the one in \cite{LattPt}, we
generate a list of all tetrahedra whose centroid is, for any $29/56\le x\le9/14$, at a distance less than
$\sqrt{3/2}$ from the centroid of $T_0$. There are 46 such tetrahedra. All other tetrahedra have
circumspheres which do not intersect the circumsphere of $T_0$, and therefore
they do not intersect $T_0$. For each tetrahedron in the list we can establish the existence of a separating plane
separating it from $T_0$. Two tetrahedra have no overlap if and only if they are separated by a plane,
and moreover, such a plane always exists which passes through three of the eight vertices of the two tetrahedra. By exhaustively
verifying that one of the finitely many planes that pass through three of the vertices separates the two tetrahedra,
we establish that each tetrahedron in the list can be separated from $T_0$.
\end{proof}

Note that the above construction, which uses a specific orthogonal coordinate basis $\{\mathbf{a},\mathbf{b},\mathbf{c}\}$,
is a special case of a general family of packings of non-regular tetrahedra that can be obtained using
the same construction, but with a general monoclinic coordinate basis ($\mathbf{a}\cdot\mathbf{b}=\mathbf{b}\cdot\mathbf{c}=0$).
Each of these more general packings is an image of a packing in the original family under an affine map from a three-parameter
family (not counting pure dilation). As the lack of overlap
between tetrahedra and the packing fraction are both affine-invariant,
these are also transitive packings of the same packing fraction.

\begin{figure}
\begin{centering}
\includegraphics[bb=10bp 0bp 320bp 200bp,clip,scale=0.5]{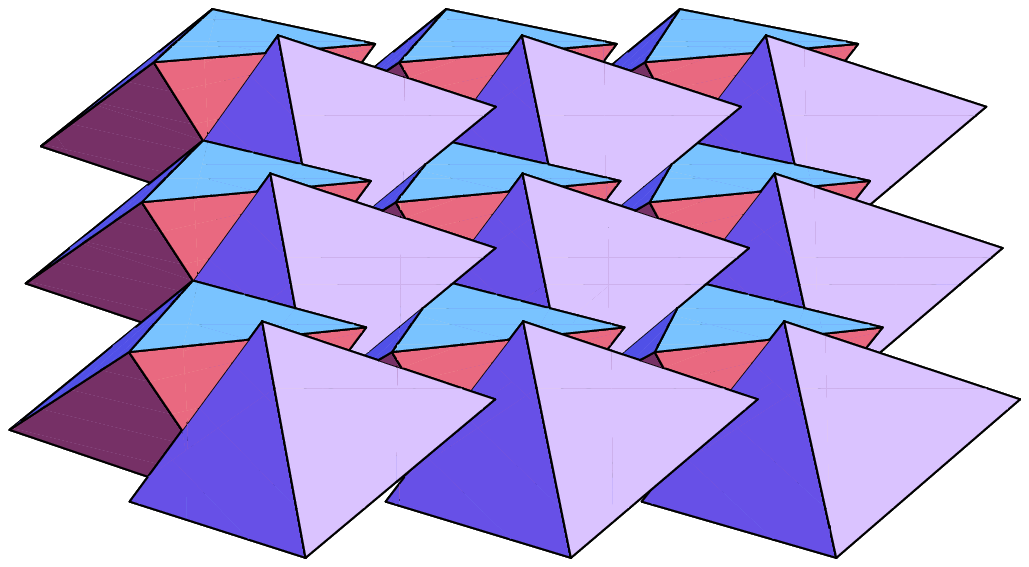}
\includegraphics[bb=20bp 0bp 320bp 240bp,clip,scale=0.6]{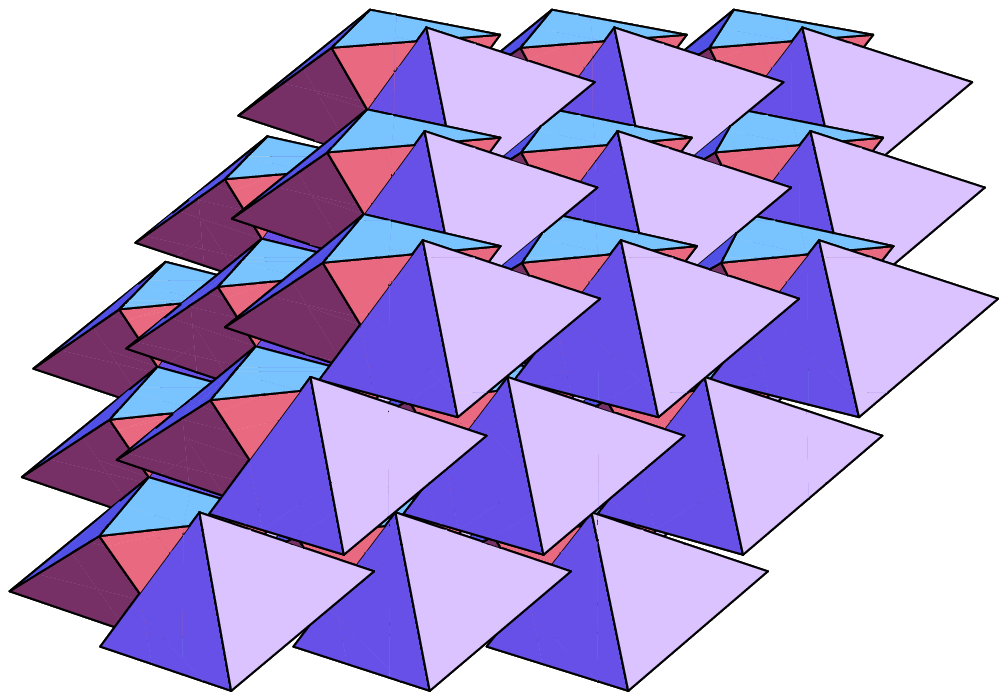}
\caption{Small portions of one layer and three stacked layers
in the dimer-double-lattice packing given by $x=4/7$.}
\end{centering}

\end{figure}
By the construction of the double lattice, there is an inversion center
that sends one lattice of dimers into the other. Note that a lattice
translation composed with an inversion about a point corresponds to
an inversion about a point related to the original inversion center
by half the lattice vector. It follows then that in any primitive unit cell
of the lattice, there are eight such inversion centers. These eight
inversion centers form the vertices of a parallelepiped one-eighth
the volume of the primitive unit cell of the lattice. This parallelepiped
is the equivalent of the ``extensive parallelogram'' described
in \cite{Kup1} whose vertices are the inversion points that generate
the double lattice. As in \cite{Kup1}, the parallelepiped is inscribed
in the body being packed --- the bipyramidal dimer in our case.

An interesting feature of the packing is the presence of
the free parameter $x$. The effect of a change in $x$
is to slide fixed layers of the packing relative one another
along the $\mathbf{a}$-direction. These layers are the layer
generated by the translations $\mathbf{a}$ and $\mathbf{b}$ from
the four tetrahedra of the primitive unit cell and the layers parallel
to it. The construction yields a
valid packing when the small protrusions in one layer
are staggered to fit into small gaps in the neighboring
layer, which is the case for all $29/56\le x\le9/14 \pmod{1}$.
Within this range, each layer can slide against the
neighboring layer without changing the spacing between
the two or creating collisions. It is possible then to
obtain equally dense, non-transitive packings by staggering
consecutive layers arbitrarily within the allowed range.

\begin{figure}
\begin{centering}
\includegraphics[scale=0.55]{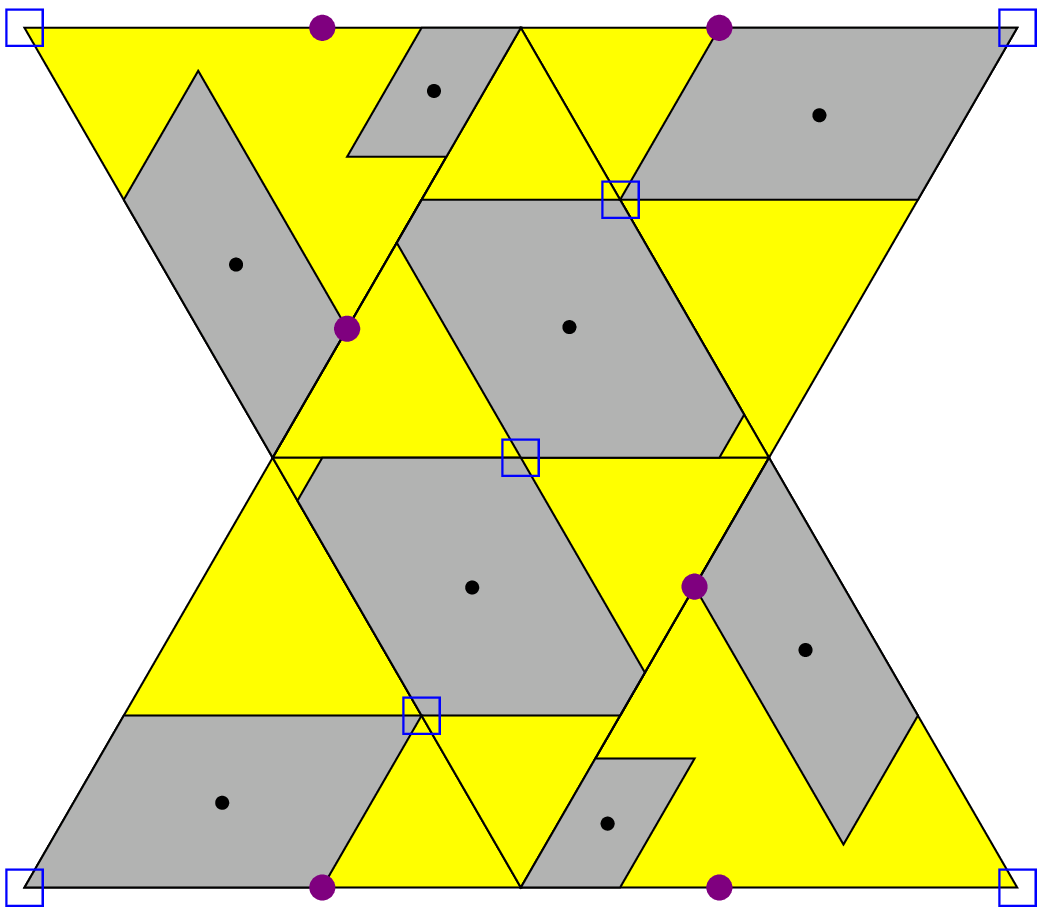}\includegraphics[scale=0.55]{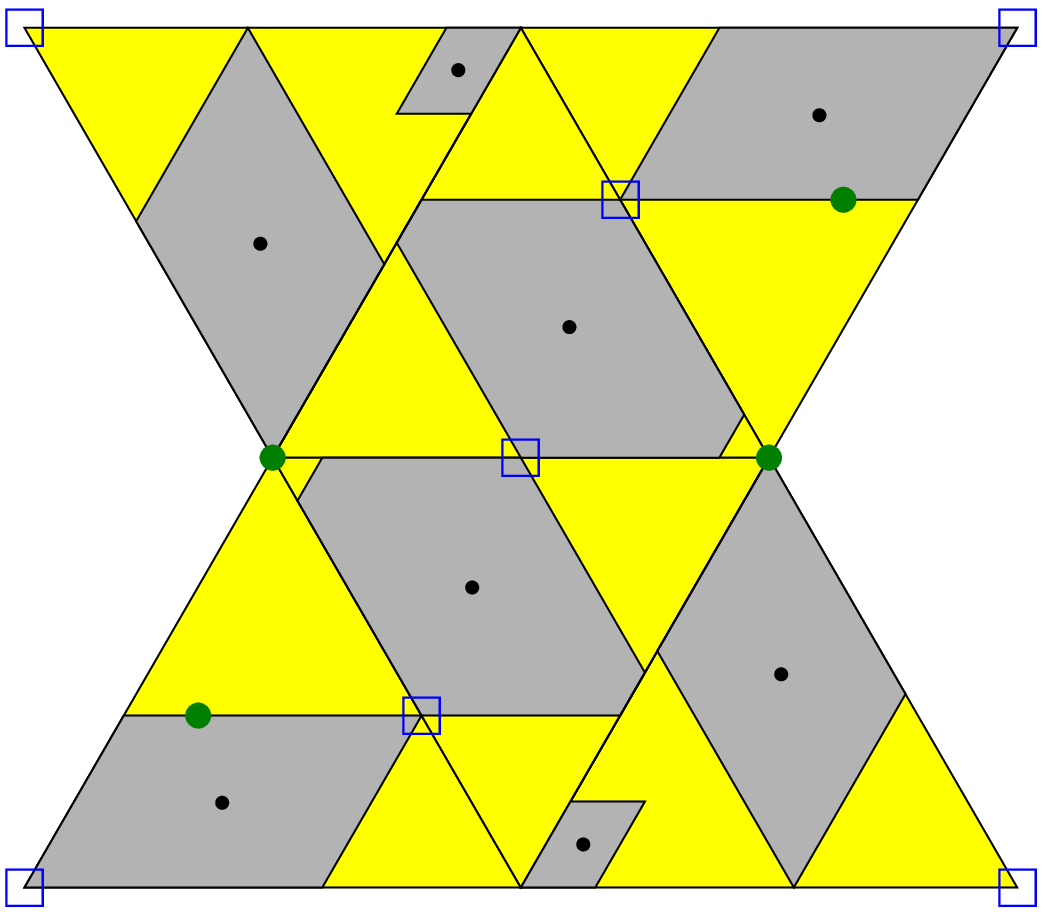}
\par\end{centering}

\caption{The contacts on the surface of a dimer shown on a net diagram for
$x=29/56$ (left) and $x=9/14$ (right): the face-face contacts
(gray), whose centers (black dots) lie on inversion centers, four
of which are fixed and four of which move as a function of $x$;
the four point contacts made regardless of the value of $x$
(blue squares), all lying on two-fold axes; the four point contacts formed
only for $x=29/56$ (purple dots); and the four point contacts formed
only for $x=9/14$ (green dots).}

\end{figure}

We describe next the contacts formed by each dimer in the packing,
and they are illustrated in Figures 1 and 2. Each of the eight vertices
of the inscribed parallelepiped corresponds to the center of a face-face
contact between bipyramids of opposite orientations, accounting for
all contacts between oppositely-oriented bipyramids. Four of these contacts
are within one layer and four of them are with the layers above and below.
The contacts formed between like-oriented bipyramids vary with the parameter $x$:
for all values of $x$ there are two edge-edge contacts, a vertex-edge
contact and an edge-vertex contact (all of these contacts occur on
two-fold axes and are within one layer); for $x=29/56$ there are four
additional edge-edge contacts with dimers in neighboring layers
(which turn into overlaps for $x<29/56$); and for $x=9/14$ there
are instead two vertex-face contacts and two face-vertex contacts with dimers
in neighboring layers (which again turn into overlaps
for $x>9/14$). Thus, each dimer makes respectively twelve, sixteen,
or sixteen contacts in the three cases, and correspondingly, each
tetrahedron makes eight, ten, or eleven contacts.

\begin{table}
\begin{tabular}{|l|l|}
\hline 
fundamental & $T_0=\operatorname{conv} \{\mathbf{r}_i \mid i=1,2,3,4\}$
\tabularnewline
tetrahedron & $\mathbf{r}_{1}=[(433-86\sqrt{10})\mathbf{a}+(611-133\sqrt{10})\mathbf{b}+$
\tabularnewline
&
\hspace{20 pt} $(188-22\sqrt{10})\mathbf{c}]/246$
\tabularnewline
& $\mathbf{r}_{2}=[(111-30\sqrt{10})\mathbf{a}+(93-75\sqrt{10})\mathbf{b}+$
\tabularnewline
& \hspace{20 pt}$(-66-42\sqrt{10})\mathbf{c}]/246$ 
\tabularnewline
 & $\mathbf{r}_{3}=[(-85-28\sqrt{10})\mathbf{a}+(13-29\sqrt{10})\mathbf{b}+$
\tabularnewline
& \hspace{20 pt} $(4+10\sqrt{10})\mathbf{c}]/246$
\tabularnewline
 & $\mathbf{r}_{4}=[(179-106\sqrt{10})\mathbf{a}+(427-101\sqrt{10})\mathbf{b}+$
\tabularnewline
& \hspace{20 pt} $(-20-50\sqrt{10})\mathbf{c}]/246$
\tabularnewline
\hline
other tetrahedron & $T_1=I(T_0)=-T_0$ \tabularnewline
in the unit cell & \tabularnewline
\hline
space group & translations by $\mathbf{a}$, $\mathbf{b}$, $\mathbf{c}$\tabularnewline
generators & $I=$ inversion about $0$\tabularnewline
\hline 
packing fraction & $(139+40\sqrt{10})/369$\tabularnewline
\hline
coordinate basis for & $\mathbf{a}=(1,-(13-4\sqrt{10})/3,0)$\tabularnewline
which tetrahedra are & $\mathbf{b}=(-(4-\sqrt{10})/3,3-\sqrt{10},-1)$\tabularnewline
regular with unit edge& $\mathbf{c}=(3-\sqrt{10},1,(4-\sqrt{10})/3)$\tabularnewline
\hline
\end{tabular}

\caption{The construction of the simple-double-lattice packing in a general triclinic coordinate basis.}

\end{table}

\begin{figure}
\begin{centering}
\includegraphics[bb=10bp 0bp 320bp 300bp,clip,scale=0.4]{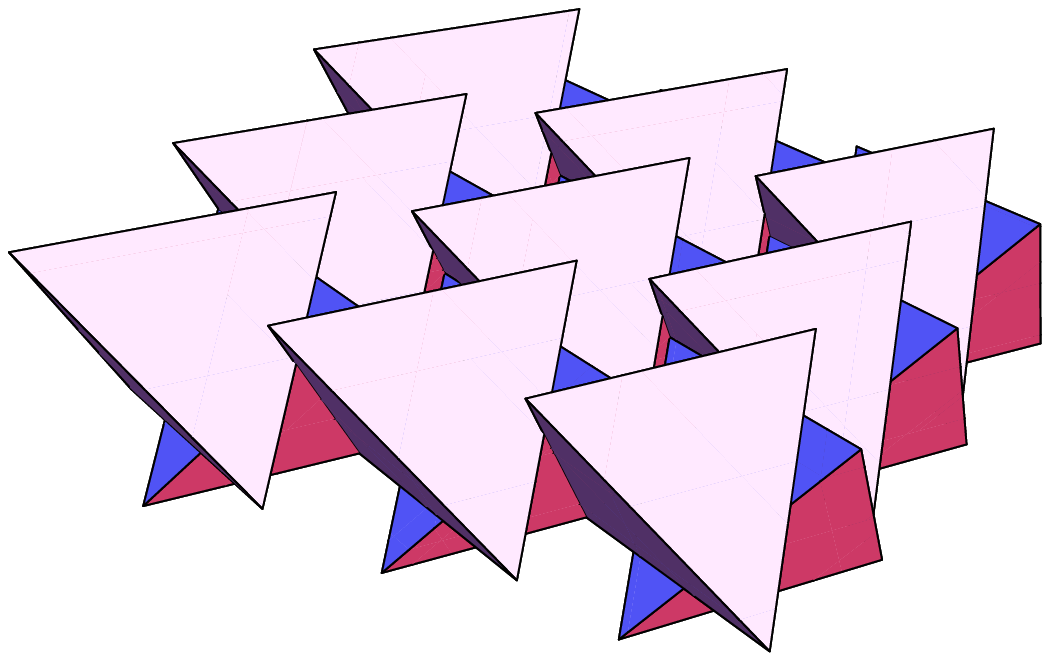}
\includegraphics[bb=20bp 0bp 300bp 350bp,clip,scale=0.4]{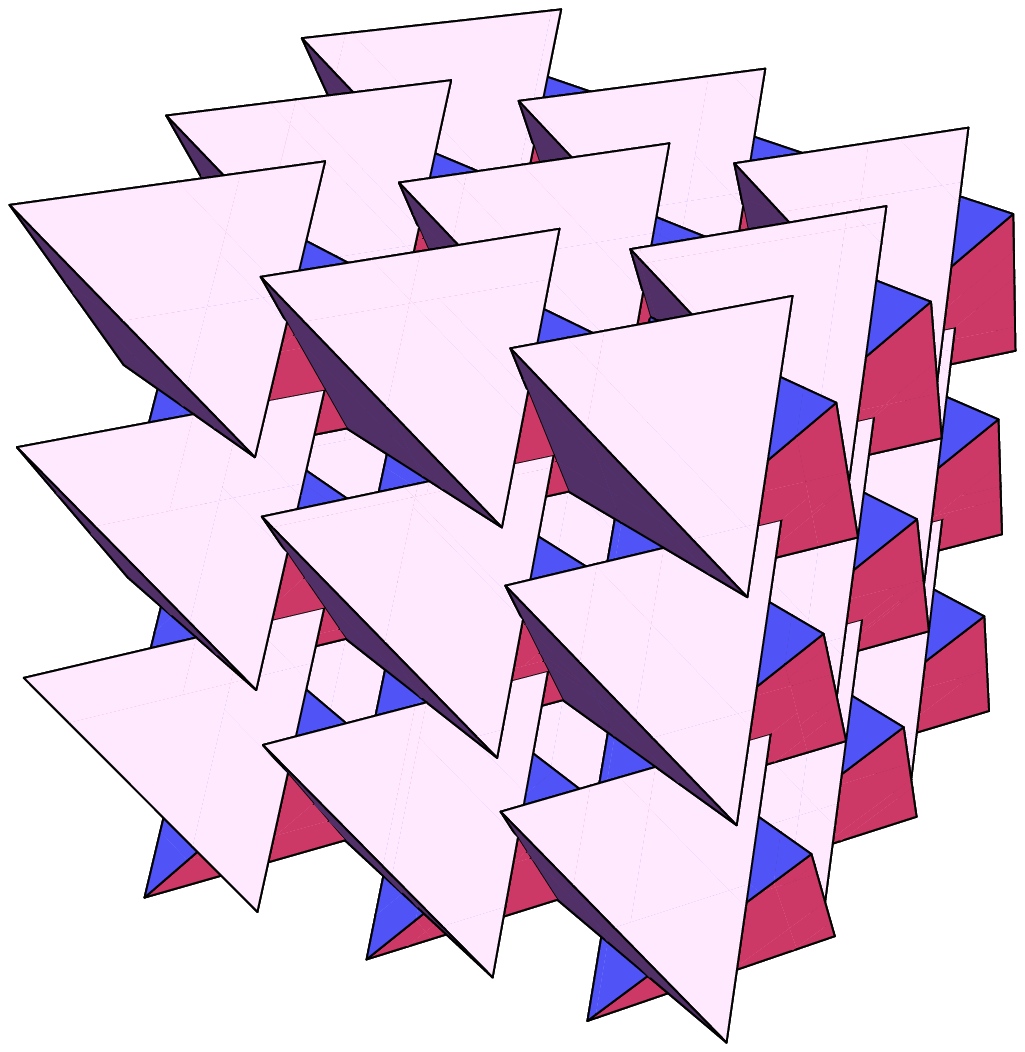}
\caption{Small portions of one layer and three stacked layers
in the simple-double-lattice packing.}
\end{centering}

\end{figure}

\section{Simple double-lattice packing}

Our numerical search also yielded a packing with two tetrahedra
per repeating unit, which we could identify as a simple double lattice
(that is a double lattice of
tetrahedra, not of dimers) with packing fraction
$\frac{139+40\sqrt{10}}{369}\approx0.7194$. We present therefore a second theorem.

\begin{thm}
There exists a packing of regular tetrahedra having packing
fraction $\frac{139+40\sqrt{10}}{369}$. This packing is
periodic, with each unit cell of the lattice containing two tetrahedra.
The group of isometries leaving the packing invariant is a crystallographic space group
of type $P\bar{1}$ and acts transitively on the individual tetrahedra of the packing.
\end{thm}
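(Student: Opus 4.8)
The plan is to follow verbatim the template of the proof of Theorem 1, since the simple double lattice is the structurally simplest instance of the double-lattice construction. First I would fix the coordinate basis $\{\mathbf{a},\mathbf{b},\mathbf{c}\}$ and the four vertices $\mathbf{r}_1,\dots,\mathbf{r}_4$ of the fundamental tetrahedron $T_0=\operatorname{conv}\{\mathbf{r}_i\mid i=1,2,3,4\}$ as tabulated in Table 2, and verify by direct computation that $T_0$ is a regular tetrahedron of unit edge length, i.e. that all six pairwise distances $|\mathbf{r}_i-\mathbf{r}_j|$ equal $1$ in the metric induced by the given basis vectors. This is a routine if tedious algebraic check in the ring $\mathbb{Q}[\sqrt{10}]$.

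Next I would take the candidate symmetry group $G$ to be generated by the three lattice translations $\mathbf{a},\mathbf{b},\mathbf{c}$ together with the inversion $I$ about the origin; as its point group $\{1,I\}$ has order $2$ and its translations form a primitive triclinic lattice, $G$ is a space group of type $P\bar{1}$. The packing is $\mathcal{P}=\{g(T_0)\mid g\in G\}$, which is invariant under $G$ by construction. Since $T_1=I(T_0)=-T_0$, each primitive cell contains exactly the two tetrahedra $T_0$ and $T_1$, and under the translation subgroup the packing splits into the two orbits of $T_0$ and $T_1$; the inversion $I$ interchanges these orbits, so $G$ acts transitively on the constituent tetrahedra. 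The packing fraction then follows exactly as in Theorem 1 from
$$\phi=\frac{2\operatorname{vol}(T_0)}{|\det([\mathbf{a},\mathbf{b},\mathbf{c}])|}=\frac{2(\sqrt{2}/12)}{|\det([\mathbf{a},\mathbf{b},\mathbf{c}])|},$$
and I would confirm that substituting the given basis vectors yields exactly $(139+40\sqrt{10})/369$.

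The substantive part, and the expected main obstacle, is verifying that $\mathcal{P}$ is genuinely a packing, i.e. that distinct tetrahedra have disjoint interiors. By transitivity it suffices to show that $T_0$ overlaps no other tetrahedron of $\mathcal{P}$. Using a closest-lattice-point algorithm as in \cite{LattPt}, I would enumerate the finitely many tetrahedra whose centroid lies within $\sqrt{3/2}$ (twice the circumradius $\sqrt{3/8}$) of the centroid of $T_0$; every remaining tetrahedron has a circumsphere disjoint from that of $T_0$ and hence cannot intersect it. For each tetrahedron $T$ in this finite list I would exhibit a separating plane, using the fact that two disjoint convex polytopes always admit a separating plane through three of their combined vertices: I would search exhaustively over the finitely many planes through three of the eight vertices of $T_0$ and $T$ and confirm that one of them separates $T$ from $T_0$. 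In contrast to Theorem 1 there is no free parameter here, so each separation is a single fixed inequality over $\mathbb{Q}[\sqrt{10}]$ rather than a condition to be maintained over an interval of $x$; this makes the verification conceptually simpler, and its only real difficulty is the exact-arithmetic bookkeeping.
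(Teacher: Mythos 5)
Your proposal is correct and follows essentially the same route as the paper, which simply states that the proof of Theorem 2 ``proceeds equivalently to the proof of Theorem 1'' using the data of Table 2; you have filled in exactly the steps that equivalence entails (regularity check of $T_0$, the $P\bar{1}$ group action, the volume ratio for $\phi$, and the finite separating-plane verification). Your observation that the absence of the free parameter $x$ makes the overlap check a set of fixed inequalities over $\mathbb{Q}[\sqrt{10}]$ is a correct and slightly more explicit account than the paper gives.
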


The proof of this theorem proceeds equivalently to the proof of Theorem 1. The
vertices of the initial tetrahedron $T_0$ are given in Table 2 in terms of a triclinic coordinate
basis which is also given in Table 2. The group of isometries we use to construct the
packing is generated by translation by the three vectors $\mathbf{a}$, $\mathbf{b}$, and $\mathbf{c}$ and by inversion
about the origin. This is a space group of crystallographic type $P\bar{1}$, with a point group
of order 2 and a triclinic lattice \cite{spacegroups}. Figure 3 shows a portion of the packing.

The simple-double-lattice packing again
has eight inversion centers per primitive cell at
the vertices of a parallelepiped. However, in this case only five of
the vertices are on the surface of the tetrahedron. Each tetrahedron
in the packing is in contact with nineteen others.

\begin{table}
\begin{tabular}{lllll}
Name & $\phi$ & $N$ & $\bar{Z}$ & Transitive\tabularnewline
\hline 
Optimal lattice\cite{Latt} & $18/49\approx0.3673$ & $1$ & $14$ & Yes\tabularnewline 
Warp and weft\cite{Welsh} & $2/3\approx0.6666$ & $2$ & $10$ & Yes\tabularnewline 
Welsh\cite{Welsh} & $17/24\approx0.7083$ & $34$ & $25.9$ & No\tabularnewline 
Simple double lattice & $\frac{139+40\sqrt{10}}{369}\approx0.7194$ & $2$ & $19$ & Yes\tabularnewline 
Wagon wheels\cite{Chen} & $0.7786$ & $18$ & $7.1$ & No\tabularnewline 
Compressed wagon wheels\cite{TorqNat} & $0.7820$ & $72$ & $7.6$ & No \tabularnewline 
Disordered wagon wheels\cite{TorqPRE} & $0.8226$ & $314$ & $7.4$ & No\tabularnewline 
Quasicrystal approximant\cite{Glotzer} & $0.8503$ & $656$ & & No \tabularnewline
Dimer double lattice & $100/117\approx0.8547$ & $4$ & $8$ to $11$ & Yes\tabularnewline 
\hline 
\end{tabular}

\caption{Some studied transitive and non-transitive packings of regular tetrahedra with
packing fraction $\phi$, number of tetrahedra in the repeating unit $N$, and
average number of contacts per tetrahedron $\bar{Z}$ where available.}

\end{table}

\section{Discussion}

In Table 3, we compare the packings presented here to other studied
packings of regular tetrahedra. Both packings are denser than the densest
previously-reported transitive packing, a double lattice presented by
Conway and Torquato (which we call the "warp-and-weft"
packing due to the interweaving arrangement of its tetrahedra) \cite{Welsh},
and the dimer double lattice is denser than any previously-reported
packing.

The results presented go against the recent trend of ever-growing
repeating units in densest-known packings and demonstrate that a
large repeating unit is not a necessary property
of a dense packing of regular tetrahedra. It is curious that previous
simulations, utilizing a more physical search
dynamic \cite{TorqNat,TorqPRE, Glotzer}, yielded dense packings that
were either disordered, had quasicrytalline order, or
had crystalline order characterized by a very large repeating unit,
and were not able to find the denser class of structures presented here,
(reminiscent perhaps of Kurt Vonnegut's ice-nine,
a fictional phase of water that is more stable, but kinetically unreachable).

Our results yield the surprising situation wherein the densest-known
packing of icosahedra is now sparser than the corresponding packing
of tetrahedra, a solid which just four years ago was a prime candidate
for a counterexample of a conjecture by Ulam that the sphere is
the sparsest-packing convex solid \cite{Welsh}. As the packing
can be generally extended to any tetrahedron in a three-parameter family
generated by deformations of the monoclinic coordinate basis,
if any tetrahedron provides a counterexample
of Ulam's conjecture, it is not a tetrahedron of that family.

The regular tetrahedron is no longer outcast, as it long was,
from the respectable family of convex polyhedra whose largest-achieved packing
density is realized by a transitive arrangement.
While there are some convex solids whose maximum packing density clearly
cannot be achieved by a transitive arrangement (the convex
Schmitt-Conway-Danzer polyhedron can tile space, but only in
aperiodic and non-transitive ways \cite{Danz}), the majority
of regular and semi-regular polyhedra have been to-date packed most
densely in transitive packings \cite{TorqNat}.
Whether this situation is accidental, the result of bias favoring
the discovery of transitive packings, or a more fundamental property
governing the packing of a certain class of solids is still an open question.

This work was supported by grant NSF-DMR-0426568.

\end{document}